\newtheorem{theorem}{Theorem}[section]
\newtheorem{corollary}{Corollary}[theorem]
\newtheorem{prop}[theorem]{Proposition}
\newtheorem{lemma}[theorem]{Lemma}
\theoremstyle{definition}
\newtheorem{definition}[theorem]{Definition}
\theoremstyle{remark}
\newtheorem{remark}[theorem]{Remark}
\newcommand{\R}{{\mathbb R}}
\newcommand{\F}{{\mathcal{F}}}
\newcommand{\cS}{{\mathcal{S}}}
\newcommand{\sS}{{\mathscr{S}}}
\newcommand{\fS}{{\mathfrak{S}}}
\newcommand{\tr}{{\text{tr}}}
\subjclass[2020]{42B10, 42B15, 47G30, 47B10}
\title[Quantum Decoupling]{Decoupling for Schatten class operators in the setting of Quantum Harmonic Analysis}
\author{Helge J. Samuelsen}
\address{Department of Mathematical Sciences,
         Norwegian University of Science and Technology,
         Trondheim, Norway}
\email{helge.j.samuelsen@ntnu.no}
\date{\today}
\begin{document}

\begin{abstract}
We introduce the notion of decoupling for operators, and prove an equivalence between classical $\ell^qL^p$ decoupling for functions and $\ell^q\cS^p$ decoupling for operators on bounded sets in $\R^{2d}$. We also show that the equivalence only depends on the bounded set $\Omega$, and not the values of $p,q$ nor the partition of $\Omega$. The proof relies on a quantum version of Wiener's division lemma.
\end{abstract}
\maketitle

\section{Introduction}
In this paper we consider decoupling for continuous linear operators from the Schwartz class $\sS(\R^d)$ into the tempered distributions $\sS'(\R^d)$. Classical decoupling has in recent years had an enormous impact in many different areas of analysis. First introduced by Wolff for the cone in \cite{Wolff_00}, it later grew to have applications in harmonic analysis, number theory and the theory of partial differential equations. It was used by Bourgain, Demeter and Guth to prove the Vinogradov's mean value conjecture \cite{Bourgain_Demeter_Guth_16}. This was done by achieving an upper bound on the growth of  the decoupling constant for the moment curve $\Gamma:[0,1]\to \R^n$ given by $\Gamma(t)=(t,t^2,\ldots,t^{n})$. A different proof for decoupling of the moment curve can also be found in \cite{Gao_Li_Young_Zorin-Kranich_21}.

In what follows, we will extend the notion of decoupling to the setting of operators through the techniques of quantum harmonic analysis as introduced by Werner in \cite{Werner}. Several classical results from harmonic analysis have already been extended in this fashion. For instance, an operator analogue of Wiener's Tauberian theorem was considered in \cite{Luef_Skrettingland_21,Fulsche_Luef_Werner_24}.

We start by introducing the \textit{classical} decoupling constant in order to set some notation. Unlike the classical literature on decoupling, we have opted to restrict ourselves to the case $n=2d$, and replace the classical Fourier transform by the symplectic Fourier transform denoted $\F_\sigma$. This reflects the importance of the symplectic structure of the phase space $\R^{d}\times \widehat{\R^{d}}\cong \R^{2d}$.
\begin{definition}
Let $\Omega\subset \R^{2d}$, and $\mathcal{P}_{\Omega}$ be a partition of $\Omega$. We define the classical decoupling constant $\mathcal{D}^{\mathscr{C}}_{p,q}(\mathcal{P}_\Omega)$ to be the smallest constant for which the inequality
\[
\left\|\sum_{\theta\in\mathcal{P}_{\Omega}}f_{\theta} \right\|_{L^p(\R^{2d})}\leq \mathcal{D}^{\mathscr{C}}_{p,q}(\mathcal{P}_\Omega)\left(\sum_{\theta\in \mathcal{P}_{\Omega}}\|f_\theta\|_{L^p(\R^{2d})}^q\right)^\frac{1}{q},
\]
holds for any collection of functions with $\text{supp }\F_\sigma(f_{\theta})\subseteq \theta$.
\end{definition}
We refer to this as $\ell^qL^p$ decoupling. Decoupling has close ties to the Fourier restriction problem, as noted in \cite{Guth_23}. For instance in \cite{Laba_Wang_18}, \L aba and Wang used decoupling to prove existence of sets for which near optimal Fourier restriction is achieved.
In a recent paper, the Fourier restriction problem was extended to the setting of Schatten class operators on $L^2(\R^d)$ \cite{Luef_Samuelsen_24}. Here an equivalence between the classical and quantum Fourier restriction problem for compact sets was established using techniques from quantum harmonic analysis. A natural question is therefore if similar results can be achieved for decoupling.
To answer this question, we first introduce the operator version of the decoupling constant. We refer to the following as the quantum decoupling constant associated to $\ell^q\cS^p$ decoupling.
\begin{definition}
Let $\Omega\subset \R^{2d}$, and $\mathcal{P}_{\Omega}$ be a partition of $\Omega$.
We define the quantum decoupling constant $\mathcal{D}^{\mathscr{Q}}_{p,q}(\mathcal{P}_\Omega)$ to be the smallest constant for which the inequality
\[
\left\|\sum_{\theta\in\mathcal{P}_{\Omega}}T_{\theta} \right\|_{\cS^{p}}\leq \mathcal{D}^{\mathscr{Q}}_{p,q}(\mathcal{P}_\Omega)\left(\sum_{\theta\in \mathcal{P}_{\Omega}}\|T_\theta\|_{\cS^p}^q\right)^\frac{1}{q},
\]
holds for any collection of operators $\{T_\theta\}_{\theta\in\mathcal{P}_\Omega}\subset \mathcal{L}(\sS(\R^d);\sS'(\R^d))$ with $\mathrm{supp}\left(\F_W(T_{\theta})\right)\subseteq \theta$.
\end{definition}
Here $\F_W$ denotes the Fourier-Wigner transform. For a trace class operator $T$, the Fourier-Wigner transform is a function defined at a point $z\in\R^{2d}$ by
\[
\F_W(T)(z)=\tr(T\rho(-z)).
\]
Here $\rho$ denotes the symmetric time-frequency shift acting on $f\in L^2 (\R^d)$ by
\begin{equation}\label{Schr-rep}
\rho(x,\xi)f(t)=e^{-\pi i x\cdot \xi}e^{2\pi i \xi\cdot t}f(t-x).
\end{equation}

We are mainly interested in bounded linear operators on $L^2(\R^d)$. However, for technical reasons it is convenient to also consider continuous linear operators from $\sS(\R^d)$ to $\sS'(\R^d)$, denoted $\mathcal{L}(\sS(\R^{d});\sS'(\R^{d}))$. The main reason for considering these operators is that the Fourier-Wigner transform can be extended to an isomorphism between $\mathcal{L}(\sS(\R^{d});\sS'(\R^{d}))$ and $\sS'(\R^{2d})$. 
Since any bounded linear operator on $L^2(\R^d)$ is a continuous linear operator from $\sS(\R^d)$ into $L^2 (\R^d)\subset\sS'(\R^d)$, this allows us to define a Fourier-Wigner transform for general bounded linear operators on $L^2(\R^d)$.
The main result of the paper is the following.
\begin{theorem}\label{MainThm}
Let $\Omega\subset \R^{2d}$ be a bounded set, and let $\mathcal{P}_{\Omega}$ be a partition of $\Omega$. Then there exists $C=C(\Omega)\geq 1$ such that 
\[
C^{-1}\mathcal{D}_{p,q}^{\mathscr{C}}(\mathcal{P}_\Omega)\leq \mathcal{D}_{p,q}^{\mathscr{Q}}(\mathcal{P}_\Omega)
\leq C\,\mathcal{D}_{p,q}^{\mathscr{C}}(\mathcal{P}_\Omega)
\]
holds for any $1\leq p,q\leq \infty$.
\end{theorem}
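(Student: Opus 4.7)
The plan is to transfer the decoupling inequality between the two settings via convolution with a fixed auxiliary Schwartz operator, exploiting the basic fact in quantum harmonic analysis that the Fourier-Wigner transform intertwines QHA convolutions with pointwise products of the corresponding symbols. Concretely, I would build a pair of maps $\Phi(f) := f \star A$ and $\Psi(T) := T \star B$, where $A$ and $B$ are fixed auxiliary operators depending only on $\Omega$, that (i) preserve the support condition on the Fourier(-Wigner) side, and (ii) are uniformly bi-Lipschitz between $L^p(\R^{2d})$ and $\cS^p$ with constants independent of $p$, $q$, and of the partition.

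First I would choose a Schwartz operator $A$---for instance a rank-one Gaussian operator---whose Fourier-Wigner transform $\F_W(A)$ is nowhere zero on an open neighborhood of the bounded set $\Omega$. Boundedness of $\Omega$ is essential here: the quantum version of Wiener's division lemma, highlighted in the abstract, then supplies an operator $B \in \cS^1$ satisfying $\F_W(A) \cdot \F_W(B) \equiv 1$ on a neighborhood of $\Omega$. From the convolution-to-multiplication identity $\F_W(f \star A) = \F_\sigma(f) \cdot \F_W(A)$ and an analogous operator-operator identity relating $\F_\sigma(T \star B)$ to $\F_W(T) \cdot \F_W(B)$, one immediately checks that both $\Phi$ and $\Psi$ preserve the relevant support condition; moreover $\Psi \circ \Phi$ acts as the identity on functions with $\mathrm{supp}\,\F_\sigma f \subseteq \Omega$, and symmetrically $\Phi \circ \Psi$ is the identity on operators with $\mathrm{supp}\,\F_W T \subseteq \Omega$.

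For the direction $\mathcal{D}_{p,q}^{\mathscr{C}}(\mathcal{P}_\Omega) \leq C\,\mathcal{D}_{p,q}^{\mathscr{Q}}(\mathcal{P}_\Omega)$, take $\{f_\theta\}$ with $\mathrm{supp}\,\F_\sigma f_\theta \subseteq \theta$ and set $T_\theta := \Phi(f_\theta)$. Young-type inequalities for QHA convolution then yield $\|T_\theta\|_{\cS^p} \leq \|A\|_{\cS^1}\|f_\theta\|_{L^p}$ and, dually, $\|\sum_\theta f_\theta\|_{L^p} = \|\Psi(\sum_\theta T_\theta)\|_{L^p} \leq \|B\|_{\cS^1}\|\sum_\theta T_\theta\|_{\cS^p}$. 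Chaining these with the quantum decoupling inequality applied to $\{T_\theta\}$ yields the classical bound with $C(\Omega) \leq \|A\|_{\cS^1}\|B\|_{\cS^1}$. The reverse inequality runs symmetrically: starting from $\{T_\theta\}$, set $g_\theta := \Psi(T_\theta)$, check the classical support condition via the operator-operator identity above, and invoke classical decoupling on $\{g_\theta\}$.

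The hard part will be establishing the quantum Wiener division lemma in the precise form required: producing the auxiliary operator $B$ whose Fourier-Wigner symbol inverts $\F_W(A)$ on a neighborhood of $\Omega$, together with an $\cS^1$ bound on $B$ depending only on $\Omega$. A secondary technical point is justifying the two Young-type inequalities $\|f \star A\|_{\cS^p} \leq \|A\|_{\cS^1}\|f\|_{L^p}$ and $\|T \star B\|_{L^p} \leq \|B\|_{\cS^1}\|T\|_{\cS^p}$ uniformly across $1 \leq p \leq \infty$, the endpoints $p \in \{1,\infty\}$ typically coming from complex interpolation between trace and operator-norm estimates. Once these are in hand, the $\ell^q$ aggregation and uniformity in the partition are automatic, because $A$ and $B$ are chosen once, before $\mathcal{P}_\Omega$, $p$, or $q$ are specified.
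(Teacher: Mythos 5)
Your proposal is correct and follows essentially the same route as the paper: convolve with a fixed rank-one Gaussian-type Schwartz operator, invert on the bounded Fourier support via the quantum Wiener division lemma, transfer the support condition through the convolution--multiplication identities, and control norms both ways with the Werner--Young inequality, so the constant depends only on $\Omega$. The paper merely packages your two-map inversion $\Psi\circ\Phi=\mathrm{Id}$ (and its counterpart) as a corollary of the division lemma giving the one-sided bounds $\|f\|_{L^p}\leq C(\Omega)\|f\star(g\otimes h)\|_{\cS^p}$ and $\|T\|_{\cS^p}\leq C(\Omega)\|T\star(g\otimes h)\|_{L^p}$, which is the same argument in slightly different form.
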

Rather interesting is the fact that the constant in Theorem \ref{MainThm} relies only on $\Omega$ and not on the partition $\mathcal{P}_\Omega$. The fact that $\Omega$ is bounded is crucial. The proof uses a quantum version of Wiener's division lemma. More specifically, we use the fact that any function whose Fourier transform is compactly supported can be written as a convolution between itself and a suitable Schwartz function.

Since the constant $C(\Omega)$ is the same for all choices of $p$ and $q$, it is unlikely to be optimal. For instance, whenever $p=q=2$, the classical and quantum decoupling constants coincide by Plancherel's Theorem. Hence, we must have $C(\Omega)\equiv 1$ for the constant to be optimal in this case.

An immediate consequence can be seen by considering the truncated paraboloid 
\[
\mathbb{P}^{2d-1}:=\{(\xi,|\xi|^2)\in\R^{2d}: \xi\in \R^{2d-1}, |\xi_i|\leq1 \text{ for } 1\leq i\leq 2d-1\}.
\]
Given $0<\delta<1$, let $\mathcal{N}_{\mathbb{P}^{2d-1}}(\delta)$
denote the associated $\delta$-neighbourhood $\mathbb{P}^{2d-1}$ and
$\mathcal{P}_{\mathcal{N}_{\mathbb{P}^{2d-1}}(\delta)}$ a finitely overlapping cover of $\mathcal{N}_{\mathbb{P}^{2d-1}}(\delta)$ with
\[
\theta=\{(\xi,|\xi|^2+t): \xi\in C_\theta, |t|<\delta\},
\]
where $\{C_\theta\}$ is a partition of $[-1,1]^{2d-1}$ by cubes of side length $\delta^\frac{1}{2}$.
Combining Theorem \ref{MainThm} with the Bourgain-Demeter $\ell^2L^p$ decoupling for compact manifolds with non-vanishing Gaussian curvature \cite{Bourgain_Demeter_15} (see for instance Theorem $10.1$ in \cite{Demeter} for the case of the paraboloid), we end up with the following result.
 \begin{corollary}
For $0<\delta<1$ and any $\varepsilon>0$ there exists a constant $C_\varepsilon>0$ such that if $2\leq p\leq (4d+2)/(2d-1)$, then
\[
\mathcal{D}_{p,2}^{\mathscr{Q}}(\mathcal{P}_{\mathcal{N}_{\mathbb{P}^{2d-1}}(\delta)})\leq C_{\varepsilon} \delta^{-\varepsilon},
\]
while if $p\geq (4d+2)/(2d-1)$, then
\[
\mathcal{D}_{p,2}^{\mathscr{Q}}(\mathcal{P}_{\mathcal{N}_{\mathbb{P}^{2d-1}}(\delta)})\leq C_{\varepsilon} \delta^{-\varepsilon+\frac{2d+1}{2p}-\frac{2d-1}{4}}.
\]
\end{corollary}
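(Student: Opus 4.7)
The plan is to derive this corollary as a direct consequence of Theorem \ref{MainThm} combined with the classical Bourgain--Demeter $\ell^2 L^p$ decoupling theorem for the paraboloid. First, I would apply Theorem \ref{MainThm} to the bounded set $\Omega=\mathcal{N}_{\mathbb{P}^{2d-1}}(\delta)$ with the given partition, yielding
\[
\mathcal{D}_{p,2}^{\mathscr{Q}}(\mathcal{P}_{\mathcal{N}_{\mathbb{P}^{2d-1}}(\delta)})\;\leq\; C\bigl(\mathcal{N}_{\mathbb{P}^{2d-1}}(\delta)\bigr)\,\mathcal{D}_{p,2}^{\mathscr{C}}(\mathcal{P}_{\mathcal{N}_{\mathbb{P}^{2d-1}}(\delta)}).
\]

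Next, since $\mathcal{N}_{\mathbb{P}^{2d-1}}(\delta)\subset \mathcal{N}_{\mathbb{P}^{2d-1}}(1)$ for every $0<\delta<1$ and the latter is a fixed bounded set, I would argue that $C(\mathcal{N}_{\mathbb{P}^{2d-1}}(\delta))$ may be replaced by a single constant $C$ independent of $\delta$. This monotonicity should follow from the proof of Theorem \ref{MainThm}: the quantum Wiener division lemma produces a Schwartz function that only needs to equal $1$ on a fixed bounded superset of the support, so its norms depend only on that superset rather than on $\Omega$ itself. Alternatively, one can artificially enlarge the partition of $\mathcal{N}_{\mathbb{P}^{2d-1}}(\delta)$ into a partition of $\mathcal{N}_{\mathbb{P}^{2d-1}}(1)$ by adjoining the complement as a single extra piece and applying the theorem to the fixed set.

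Once the constant is uniform in $\delta$, it remains to invoke the Bourgain--Demeter $\ell^2 L^p$ decoupling theorem for the paraboloid, as stated for instance in Theorem 10.1 of \cite{Demeter}, which furnishes precisely the two stated bounds on $\mathcal{D}_{p,2}^{\mathscr{C}}(\mathcal{P}_{\mathcal{N}_{\mathbb{P}^{2d-1}}(\delta)})$ in the two ranges of $p$. A minor point to verify here is the compatibility between the symplectic Fourier transform used in the definition of $\mathcal{D}^{\mathscr{C}}_{p,q}$ and the classical Fourier transform used in the Bourgain--Demeter theorem; since the two differ only by an invertible linear change of variables in $\R^{2d}$, the paraboloid is mapped to another compact $(2d-1)$-dimensional manifold with non-vanishing Gaussian curvature, so decoupling transfers with no loss. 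The constant $C$ and the Bourgain--Demeter constant are then absorbed into the $C_\varepsilon$ and the $\delta^{-\varepsilon}$ factor.

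The plan is therefore essentially routine, and the only step I expect to require real attention is the uniformity of the constant $C(\Omega)$ over the shrinking family $\{\mathcal{N}_{\mathbb{P}^{2d-1}}(\delta)\}_{0<\delta<1}$. Everything else amounts to quoting the two theorems and matching notation.
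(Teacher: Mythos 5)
Your proposal follows exactly the route the paper intends: the corollary is obtained by combining Theorem \ref{MainThm} applied to $\Omega=\mathcal{N}_{\mathbb{P}^{2d-1}}(\delta)$ with the Bourgain--Demeter $\ell^2L^p$ decoupling theorem for hypersurfaces with non-vanishing Gaussian curvature, and the paper offers no further argument beyond this. Your extra attention to the $\delta$-uniformity of $C(\Omega)$ (via monotonicity under $\mathcal{N}_{\mathbb{P}^{2d-1}}(\delta)\subset\mathcal{N}_{\mathbb{P}^{2d-1}}(1)$) and to the harmless linear change of variables relating $\F_\sigma$ to the classical Fourier transform are exactly the points the paper leaves implicit, and your treatment of them is correct.
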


\subsection{Structure of the paper}
The paper is divided into three further sections. We begin with a preliminary part which introduces necessary background material needed for the proof of Theorem \ref{MainThm}. This include selected topics from time-frequency analysis, operator theory and basic quantum harmonic analysis.

The proof of Theorem \ref{MainThm} utilises similar techniques as was done for the equivalence of the Fourier restriction problem in \cite{Luef_Samuelsen_24}. Werner's correspondence theory lets us transform the problem involving operators to a question about functions by considering operator-operator convolutions. By an operator version of Young's convolution inequality, we are then able to control the norm of the convolution from above. Unfortunately, unlike the restriction problem we also need to control the norm of the convolution from below. For this, we need a quantum version of Wiener's division lemma, which is provided in section \ref{DivisionLemmaSec}. Similar results have also been considered in the study of pseudodifferential operators with bandlimited Kohn-Nirenberg symbols \cite{Grochenig_Pauwels_14} and in the setting of Gabor multipliers in quantum harmonic analysis \cite{Skrettingland_20}. In \cite{dewage_Mitkovski_23}, a classical version of Wiener's division lemma was used to study Toeplitz operators. We end the paper by combining the techniques from the previous sections to give a complete proof of Theorem \ref{MainThm} in section $4$.

\section*{Acknowledgement}
The author would like to thank Franz Luef and Ben Johnsrude for insightful discussions in preparation of this manuscript, and Robert Fulsche for suggesting useful references.

\section{Preliminaries}
\subsection{Time-frequency analysis}
For a finite Borel measure $\mu$ on $\R^{2d}$, we define the symplectic Fourier transform at a point $\zeta\in\R^{2d}$ by
\[
\F_\sigma(\mu)(\zeta)=\int_{\R^{2d}}e^{-2\pi i \sigma(\zeta,z)}d\mu(z).
\]
Here $\sigma$ denotes the standard symplectic form on $\R^{2d}$ given by
\[
\sigma\left((x,\xi),(x',\xi')\right)=x'\cdot\xi-x\cdot\xi',\qquad (x,\xi),(x',\xi')\in\R^{2d}.
\]
The symplectic Fourier transform extends to a unitary operator on $L^{2}(\R^{2d})$ with the property that $\F_\sigma^2=\mathrm{Id}_{L^2}$.

If we let $\rho:\mathbb{R}^{2d}\to \mathcal{L}\left(L^2(\R^d);L^2(\R^d)\right)$ denote the symmetric time-frequency shift given by \eqref{Schr-rep}, then it gives rise to a time-frequency distribution known as the \textit{cross-ambiguity function}. For $f,g\in L^2(\R^d)$, the cross-ambiguity function is defined as
\begin{equation}\label{AmbiguityDef}
\mathcal{A}(f,g)(z):=\langle f,\rho(z)g\rangle_{L^2},\qquad z\in\R^{2d}.
\end{equation}
Moyal's identity ensures that $\mathcal{A}(f,g)\in L^2(\R^{2d})$ as long as $f,g\in L^2(\R^d)$ \cite[Chapter $3$]{Grochenig}. An important example is given by the cross-ambiguity function of the $L^2$-normalised Gaussian $\varphi_0(t)=2^{d/4}\exp(-\pi|t|^2)$ with itself. In this case, given $z\in \R^{2d}$, a direct computation yields
\begin{equation}\label{Ambi-Gauss}
\mathcal{A}\left(\varphi_0,\varphi_0\right)(z)=e^{-\pi\frac{|z|^2}{2}}.
\end{equation}
We also mention the covariance property of the cross-ambiguity function.
\begin{lemma}[Lemma $2.5$ in \cite{Luef_Samuelsen_24}]\label{Covariance property}
    Let $f,g\in L^2(\R^d)$. Then for any $z,\zeta\in \R^{2d}$
    \begin{equation*}
        \mathcal{A}(\rho(\zeta)f,g)(z)=e^{\pi i \sigma(\zeta,z)}\mathcal{A}(f,g)(z-\zeta).
    \end{equation*}
\end{lemma}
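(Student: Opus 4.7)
The plan is a direct computation, relying on two standard properties of the symmetric time-frequency shifts: that $\rho(\zeta)$ is unitary on $L^2(\R^d)$ with adjoint $\rho(\zeta)^{*}=\rho(-\zeta)$, and the composition (cocycle) law
\[
\rho(z_1)\rho(z_2)=e^{\pi i\sigma(z_1,z_2)}\rho(z_1+z_2),
\]
which follows from the definition \eqref{Schr-rep} by expanding both sides and tidying up the resulting exponentials. Both facts are part of the standard time-frequency dictionary (see e.g.\ Chapter 1 of \cite{Grochenig}), so I would either quote them or verify them in one line.

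Starting from the definition \eqref{AmbiguityDef}, I would move $\rho(\zeta)$ onto the other slot of the inner product:
\[
\mathcal{A}(\rho(\zeta)f,g)(z)=\langle \rho(\zeta)f,\rho(z)g\rangle_{L^2}=\langle f,\rho(-\zeta)\rho(z)g\rangle_{L^2}.
\]
Applying the cocycle law with $z_1=-\zeta$ and $z_2=z$, and using bilinearity of $\sigma$, gives
\[
\rho(-\zeta)\rho(z)=e^{-\pi i\sigma(\zeta,z)}\rho(z-\zeta).
\]
Substituting this into the previous line and pulling the scalar factor out of the conjugate-linear slot of the $L^2$ inner product yields
\[
\mathcal{A}(\rho(\zeta)f,g)(z)=\overline{e^{-\pi i\sigma(\zeta,z)}}\,\langle f,\rho(z-\zeta)g\rangle_{L^2}=e^{\pi i\sigma(\zeta,z)}\mathcal{A}(f,g)(z-\zeta),
\]
which is the stated identity.

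There is no genuine obstacle here; the result is essentially bookkeeping. The one spot that requires attention is the sign in the conjugate-linear slot, which is precisely what turns the $e^{-\pi i\sigma(\zeta,z)}$ coming from the cocycle into the $e^{+\pi i\sigma(\zeta,z)}$ appearing in the statement. Since this lemma is already attributed to \cite{Luef_Samuelsen_24}, I would expect the exposition to simply present the computation above rather than reprove the underlying cocycle property.
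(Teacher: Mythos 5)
Your computation is correct: the adjoint identity $\rho(\zeta)^{*}=\rho(-\zeta)$ and the composition law $\rho(z_1)\rho(z_2)=e^{\pi i\sigma(z_1,z_2)}\rho(z_1+z_2)$ both hold for the symmetric shift \eqref{Schr-rep} with the paper's convention $\sigma((x,\xi),(x',\xi'))=x'\cdot\xi-x\cdot\xi'$, and the sign bookkeeping through the conjugate-linear slot comes out exactly as you state. The paper does not reprove this lemma but simply cites \cite{Luef_Samuelsen_24}, and your argument is the standard one-line verification that reference relies on, so nothing further is needed.
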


\subsection{Distributions}
We denote the Schwartz class by $\sS(\R^{2d})$, and its continuous dual space of tempered distributions by $\sS'(\R^{2d})$. For $\tau\in\sS'(\R^{2d})$ and $\psi\in \sS(\R^{2d})$, we use the sesquilinear dual pairing
\[
\langle \tau,\psi\rangle_{\sS',\sS}:=\tau(\overline{\psi}),
\]
in order to stay consistent with the $L^2$ inner product. Namely, for any $f\in L^2(\R^{2d})$,
\[
\langle f,\psi\rangle_{\sS',\sS}=\int_{\R^{2d}}f(x)\overline{\psi(x)}\,dx=\langle f,\psi\rangle_{L^2},
\]
for all $\psi\in \sS(\R^{2d})$. Using a sesquilinear dual pairing has the benefit of nicely extending the symplectic Fourier transform to an isomorphism on the tempered distributions through
\[
\langle \F_\sigma(\tau),\psi\rangle_{\sS',\sS}:=\langle \tau,\F_\sigma(\psi)\rangle_{\sS',\sS},
\]
for $\tau\in \sS'(\R^{2d})$ and all $\psi\in \sS(\R^{2d})$. Given $\eta\in \sS(\R^{2d})$ and $\tau\in \sS'(\R^{2d})$, we define the product $\eta\tau\in \sS'(\R^{2d})$ by
\[
\langle \eta\tau,\psi\rangle_{\sS',\sS}=\langle\tau,\overline{\eta}\psi\rangle_{\sS',\sS},
\]
for all $\psi\in \sS(\R^{2d})$. This is well-defined as the Schwartz class is closed under multiplication.

We denote the space of distributions with compact support by $\mathcal{E}'(\R^{2d})$, and this is a subspace of $\sS'(\R^{2d})$. It is well known that $\mathcal{E}'(\R^{2d})$ can be identified with the continuous dual space of $C^\infty(\R^{2d})$, and the Fourier transform of a compactly supported distribution is a smooth function.
\begin{lemma}[Theorem $7.1.14$ in \cite{Hormander-ALPDO1}]\label{Hormander-Lem}
Let $\tau\in \mathcal{E}'(\R^{2d})$. Then $\F_\sigma(\tau)$ is the $C^\infty$ function on $\R^{2d}$ given by
\[
\F_\sigma(\tau)(\zeta)=\langle \tau, e^{-2\pi i \sigma(\cdot,\zeta)}\rangle_{\sS',\sS}.
\]
\end{lemma}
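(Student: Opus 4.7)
The plan is to define $g(\zeta):=\langle \tau,e^{-2\pi i\sigma(\cdot,\zeta)}\rangle_{\sS',\sS}$ directly, verify that it is a smooth function of polynomial growth, and then check that it represents $\F_\sigma(\tau)$ as a tempered distribution. The starting point is the standard fact that every $\tau\in\E'(\R^{2d})$ extends uniquely to a continuous linear functional on $C^\infty(\R^{2d})$ of some finite order $N$: there exist $C>0$ and a compact neighbourhood $K$ of $\mathrm{supp}(\tau)$ so that
\[
|\langle \tau,\varphi\rangle_{\sS',\sS}|\leq C\sum_{|\alpha|\leq N}\sup_{z\in K}|\partial^\alpha\varphi(z)|
\]
for every $\varphi\in C^\infty(\R^{2d})$. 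Since $z\mapsto e^{-2\pi i\sigma(z,\zeta)}$ is smooth for each fixed $\zeta$, $g(\zeta)$ is well defined; and since the family $\{e^{-2\pi i\sigma(\cdot,\zeta)}\}_\zeta$ depends smoothly on $\zeta$ in each of the seminorms above, one can differentiate under the pairing to obtain $\partial^\alpha_\zeta g(\zeta)=\langle \tau,\partial^\alpha_\zeta e^{-2\pi i\sigma(\cdot,\zeta)}\rangle_{\sS',\sS}$, so that $g\in C^\infty(\R^{2d})$. The polynomial-in-$\zeta$ dependence of $\partial^\alpha_z e^{-2\pi i\sigma(z,\zeta)}$, combined with the finite-order bound, also yields the growth estimate $|g(\zeta)|\lesssim (1+|\zeta|)^N$.

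Next I would identify $g$ with $\F_\sigma(\tau)$ as distributions. Fix $\psi\in\sS(\R^{2d})$; unwinding the sesquilinear convention $\langle\tau,\varphi\rangle_{\sS',\sS}=\tau(\overline{\varphi})$ gives $g(\zeta)=\tau_z(e^{2\pi i\sigma(z,\zeta)})$, hence
\[
\int_{\R^{2d}}g(\zeta)\overline{\psi(\zeta)}\,d\zeta=\int_{\R^{2d}}\tau_z\bigl(e^{2\pi i\sigma(z,\zeta)}\bigr)\overline{\psi(\zeta)}\,d\zeta.
\]
If the action of $\tau$ in $z$ can be exchanged with the $\zeta$-integration, the right-hand side becomes
\[
\tau_z\!\left(\int_{\R^{2d}}e^{2\pi i\sigma(z,\zeta)}\overline{\psi(\zeta)}\,d\zeta\right)=\tau_z\bigl(\overline{\F_\sigma(\psi)(z)}\bigr)=\langle \tau,\F_\sigma(\psi)\rangle_{\sS',\sS}=\langle \F_\sigma(\tau),\psi\rangle_{\sS',\sS},
\]
where the last equality is the definition of the distributional symplectic Fourier transform. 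Holding for all $\psi$, this identifies $g$ with $\F_\sigma(\tau)$ in $\sS'(\R^{2d})$.

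The main obstacle is justifying the interchange above, which I plan to handle by Riemann-sum approximation. Using the Schwartz decay of $\psi$ against the polynomial bound on $g$, I first truncate the $\zeta$-integration to a large ball $B_R$ at the cost of an error that vanishes as $R\to\infty$. On $B_R$, Riemann sums $S_n(z)=\sum_k e^{2\pi i\sigma(z,\zeta_k)}\overline{\psi(\zeta_k)}|\Delta_k|$ converge to $h_R(z):=\int_{B_R}e^{2\pi i\sigma(z,\zeta)}\overline{\psi(\zeta)}\,d\zeta$ uniformly together with all $z$-derivatives on the compact set $K$, because differentiating $e^{2\pi i\sigma(z,\zeta)}$ in $z$ produces uniformly bounded polynomial factors in $\zeta\in B_R$. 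By linearity, $\tau_z(S_n)=\sum_k g(\zeta_k)\overline{\psi(\zeta_k)}|\Delta_k|$, which tends to $\int_{B_R}g(\zeta)\overline{\psi(\zeta)}\,d\zeta$ by continuity of the Riemann integral; the finite-order bound on $\tau$ gives $\tau_z(S_n)\to \tau_z(h_R)$; and finally $h_R\to \overline{\F_\sigma(\psi)}$ in the $C^\infty(K)$ topology as $R\to\infty$ by the Schwartz rapid decay of $\psi$. Combining these three limits yields the desired identity and completes the proof.
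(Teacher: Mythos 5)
Your proof is correct, but note that the paper does not prove this lemma at all: it is quoted verbatim as Theorem 7.1.14 of H\"ormander's book, so there is no internal argument to compare against. Your write-up is a sound, self-contained version of the classical proof: the finite-order estimate $|\langle\tau,\varphi\rangle|\leq C\sum_{|\alpha|\leq N}\sup_K|\partial^\alpha\varphi|$ valid for all $\varphi\in C^\infty(\R^{2d})$ gives well-definedness, smoothness by differentiation under the pairing (difference quotients converge in $C^N(K)$), and the bound $|g(\zeta)|\lesssim(1+|\zeta|)^N$, so that $g$ is a tempered distribution; the identification $g=\F_\sigma(\tau)$ then follows from the interchange of $\tau$ with the $\zeta$-integral, which your truncation-plus-Riemann-sum argument justifies in the standard way. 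You also handled the paper's sesquilinear convention correctly: unwinding $\langle\tau,\psi\rangle_{\sS',\sS}=\tau(\overline{\psi})$ and using the antisymmetry $\sigma(\zeta,z)=-\sigma(z,\zeta)$, the claimed formula is $g(\zeta)=\tau_z\bigl(e^{-2\pi i\sigma(\zeta,z)}\bigr)$, consistent with the definition of $\F_\sigma$ on measures, and your computation $\int e^{2\pi i\sigma(z,\zeta)}\overline{\psi(\zeta)}\,d\zeta=\overline{\F_\sigma(\psi)(z)}$ is what makes the duality step close. The only stylistic alternative worth mentioning is H\"ormander's own route (and the one implicit in the paper's preceding remarks): write $\tau=\eta\tau$ for a cutoff $\eta\in C_c^\infty$ equal to $1$ near $\mathrm{supp}(\tau)$ and deduce the interchange from continuity of $\tau$ on $C^\infty$ together with the smooth parameter-dependence of $\zeta\mapsto\eta\,e^{-2\pi i\sigma(\cdot,\zeta)}$ as a $C_c^\infty$-valued map, which avoids explicit Riemann sums; both arguments are equally rigorous, and yours has the merit of being completely elementary.
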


We also mention the following lemma.
\begin{lemma}
Let $\tau\in \mathcal{E}'(\R^{2d})$, and let $\eta\in C_c^\infty(\R^{2d})$ be such that $\eta\equiv 1$ on $\mathrm{supp}(\tau)$. Then $\tau=\eta\tau$.
\end{lemma}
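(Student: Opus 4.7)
The plan is to unfold the definitions given earlier in the preliminaries and reduce the claim to a standard property of the support of a distribution. First I would observe that, by the definition of the product of a Schwartz function with a tempered distribution recorded just above the lemma, proving the equality $\tau = \eta\tau$ in $\sS'(\R^{2d})$ is the same as showing
\[
\langle \tau, \psi\rangle_{\sS',\sS} \;=\; \langle \eta\tau, \psi\rangle_{\sS',\sS} \;=\; \langle \tau, \overline{\eta}\psi\rangle_{\sS',\sS}
\]
for every $\psi \in \sS(\R^{2d})$. Rearranging, the goal becomes
\[
\langle \tau, (1-\overline{\eta})\psi\rangle_{\sS',\sS} = 0 \qquad \text{for all } \psi\in\sS(\R^{2d}).
\]
Note that $(1-\overline{\eta})\psi\in\sS(\R^{2d})$, since $1-\overline{\eta}$ is smooth with all derivatives bounded (being a bounded perturbation of a $C_c^\infty$ function) and $\psi$ is Schwartz, so the dual pairing above makes sense.

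Next I would use the hypothesis on $\eta$: since $\eta$, and hence $\overline{\eta}$, equals $1$ on (a neighborhood of) $\mathrm{supp}(\tau)$, the function $(1-\overline{\eta})\psi$ vanishes on that same neighborhood. In particular, its support is disjoint from $\mathrm{supp}(\tau)$. By the very definition of the support of a distribution --- namely, the complement of the largest open set on which $\tau$ acts as zero --- any Schwartz function whose support avoids $\mathrm{supp}(\tau)$ is annihilated by $\tau$. Applying this to $(1-\overline{\eta})\psi$ finishes the argument.

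The only step worth flagging is interpretational rather than technical: the hypothesis ``$\eta\equiv 1$ on $\mathrm{supp}(\tau)$'' is implicitly read as ``on some open neighborhood of $\mathrm{supp}(\tau)$'', which is the standard convention and is what is needed to invoke the support argument above. If one insists on the literal reading (equality only on the closed set $\mathrm{supp}(\tau)$, which a priori need not force equality on an open neighborhood), the same conclusion can be reached by replacing $\eta$ by an auxiliary cutoff $\widetilde{\eta}\in C_c^\infty$ with $\widetilde{\eta}\equiv 1$ on an open neighborhood of $\mathrm{supp}(\tau)$ and $\mathrm{supp}(\widetilde{\eta})$ where $\eta=1$; then $\widetilde{\eta}\tau=\tau$ by the argument above, and $\eta\widetilde{\eta}=\widetilde{\eta}$ gives $\eta\tau=\eta\widetilde{\eta}\tau=\widetilde{\eta}\tau=\tau$. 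I would expect the author to simply adopt the standard convention and give the two-line argument in the first paragraph.
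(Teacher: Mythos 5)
Your main argument is correct, and it is the standard one; the paper simply records this lemma as a known fact (no proof is given there), so unfolding the paper's definition of $\eta\tau$ and reducing to the statement that $\tau$ annihilates any smooth function vanishing on a neighbourhood of $\mathrm{supp}(\tau)$ is exactly what is intended. Two small points. First, your support argument is phrased as ``support disjoint from $\mathrm{supp}(\tau)$ implies annihilation''; for $\tau\in\mathcal{E}'$ this is fine because $\mathrm{supp}(\tau)$ is compact, so disjointness of the closed set $\mathrm{supp}\bigl((1-\overline{\eta})\psi\bigr)$ from it gives vanishing on an actual neighbourhood, which is the hypothesis the standard fact really needs. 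Second, and more substantively, your fallback remark for the literal reading is wrong: if $\eta=1$ only on the closed set $\mathrm{supp}(\tau)$ and not on a neighbourhood, the conclusion can genuinely fail. Take $\tau=\delta_0'$ on $\R$ (or its obvious analogue in $\R^{2d}$) and $\eta\in C_c^\infty$ with $\eta(0)=1$ but $\eta'(0)\neq 0$; then $\eta\tau=\tau-\eta'(0)\delta_0\neq\tau$, even though $\eta\equiv 1$ on $\mathrm{supp}(\tau)=\{0\}$. Consequently the auxiliary cutoff $\widetilde{\eta}$ you propose (with $\widetilde{\eta}\equiv 1$ near $\mathrm{supp}(\tau)$ and $\mathrm{supp}(\widetilde{\eta})\subseteq\{\eta=1\}$) need not exist, and no repair is possible: the literal statement is false. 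The lemma must be read, as you do in your main argument, with $\eta\equiv 1$ on a neighbourhood of $\mathrm{supp}(\tau)$, which is harmless for the paper since the cutoff is always chosen freely around a compact set when the lemma is applied.
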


\subsection{Operator Theory}
We denote the space of bounded linear operators on $L^2(\R^d)$ by $\mathcal{L}(L^2(\R^d))$ and compact operators by $\mathcal{K}$. For each $T\in \mathcal{K}$ there exists a sequence of non-negative numbers $\{s_n(T)\}_{n\in\mathbb{N}}\subset [0,\infty)$ with $s_n(T)\to 0$ as $n\to\infty$, and two orthonormal families $\{e_n\}_{n\in\mathbb{N}}$ and $\{\eta_n\}_{n\in\mathbb{N}}$ in $L^2(\R^d)$ such that
\[
T=\sum_{n\in\mathbb{N}}s_n(T)e_n\otimes \eta_n,
\]
where $e_n\otimes \eta_n (f)=\langle f,\eta_n\rangle e_n$ is a rank-one operator. This is known as the singular value decomposition of $T$, and $\{s_n(T)\}_{n\in\mathbb{N}}$ are the singular values of $T$.

For $1\leq p\leq \infty$, define the Schatten $p$-class by 
\[
\cS^p:=\{T\in\mathcal{L}(L^2(\R^d)):\{s_n(T)\}_{n\in\mathbb{N}}\in \ell^p\},
\]
where we make the identification $\cS^\infty=\mathcal{L}(L^2(\R^d))$. By the properties of $\ell^p$-spaces, it follows that $\cS^1\subseteq \cS^p\subseteq \cS^q\subseteq \mathcal{K}$ for $q>p$. The Schatten $p$-class is a Banach space when equipped with the norm
\[
\|T\|_{\cS^p}:=\left(\sum_{n\in\mathbb{N}}|s_n(T)|^p\right)^\frac{1}{p},
\]
and for $1<p<\infty$ the dual space is identified with $\cS^{p'}$ where $p'$ is the H\"{o}lder conjugate of $p$. 
If $T=\upsilon\otimes \eta$ for some $\upsilon, \eta\in L^2(\R^d)$, then $\|T\|_{\cS^p}=\|\upsilon\|_{L^2(\R^d)}\|\eta\|_{L^2(\R^d)}$ as this is the only non-zero singular value of the rank-one operator.
For more details on Schatten classes, we refer to chapter $3$ of \cite{SimonOp}.

\subsection{Fourier Analysis of Operators}
Let $\rho:\R^{2d}\to \mathcal{L}(L^2(\R^d))$ denote the symmetric time-frequency shift defined by \eqref{Schr-rep}. For $T\in\cS^1$ we define the Fourier-Wigner transform at a point $z\in\R^{2d}$ by
\[
\F_W(T)(z)=\tr(T\rho(-z))=\langle T,\rho(z)\rangle_{\cS^1,\cS^\infty}.
\]
The function $\F_W(T)$ is known as the spreading function of the operator $T$ in time-frequency analysis \cite{Feichtinger_Kozak_98}. The spreading function allows us to write a pseudodifferential operator as a superposition of time-frequency shifts \cite[Chapter $14$]{Grochenig}.
If $F$ denotes the spreading function of a bounded linear operator $T$, then we can formally recover the operator by the inverse Fourier-Wigner transform,
\[
T=\F_W^{-1}(F)=\int_{\R^{2d}}F(z)\rho(z)\,dz.
\]
This is a continuous linear map from $L^1(\R^{2d})$ into $\mathcal{K}$ by Theorem $1.30$ in \cite{Folland_Phase}. 

Many basic properties of $\F_W(T)$ are investigated in \cite{Werner}. A version of the Riemann-Lebesgue lemma holds. If $T\in \cS^1$, then $\F_W(T)$ belongs to the space of continuous functions vanishing at infinity, denoted $C_0(\R^{2d})$. The Fourier-Wigner transform even extends to a unitary operator from $\cS^2$ to $L^2(\R^{2d})$ as a consequence of a result by Pool \cite{Pool}. Through interpolation we regain Hausdorff-Young's inequality.
\begin{prop}[Proposition $3.4$ in \cite{Werner}]
Let $1\leq p\leq 2$ and assume  $T\in\cS^p$. Then $\F_W(T)\in L^{p'}(\R^{2d})$, and
\[
\|\F_W(T)\|_{L^{p'}(\R^{2d})}\leq \|T\|_{\cS^p}.
\]
\end{prop}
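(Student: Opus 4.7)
The approach is a standard Riesz--Thorin interpolation between the endpoints $p=1$ and $p=2$. The two endpoint bounds are already (essentially) supplied by the material preceding the proposition, so the main task is to set up the interpolation correctly.

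First I would handle the endpoint $p=1$. If $T\in\cS^1$, then since $\rho(-z)$ is unitary and therefore belongs to $\cS^\infty$ with operator norm $1$, the trace duality pairing gives
\[
|\F_W(T)(z)|=|\langle T,\rho(z)\rangle_{\cS^1,\cS^\infty}|\leq \|T\|_{\cS^1}\|\rho(z)\|_{\cS^\infty}=\|T\|_{\cS^1},
\]
for every $z\in\R^{2d}$. Combined with the Riemann--Lebesgue-type statement $\F_W(T)\in C_0(\R^{2d})$ recalled just above the proposition, this yields that $\F_W:\cS^1\to L^\infty(\R^{2d})$ is bounded with norm at most $1$.

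At the other endpoint $p=2$, the excerpt already states that $\F_W:\cS^2\to L^2(\R^{2d})$ is unitary, so in particular $\|\F_W(T)\|_{L^2}=\|T\|_{\cS^2}$. It remains to interpolate between these two linear bounds. For $1<p<2$, write $\tfrac{1}{p}=(1-\theta)+\tfrac{\theta}{2}$, so that $\theta=2/p'$, and apply complex interpolation for Schatten classes, which gives $(\cS^1,\cS^2)_{[\theta]}=\cS^p$ isometrically, together with Riesz--Thorin on the target side $(L^\infty,L^2)_{[\theta]}=L^{p'}$. Since $\F_W$ is bounded by $1$ at both endpoints, the interpolated operator $\F_W:\cS^p\to L^{p'}(\R^{2d})$ also has norm at most $1$, which is exactly the stated inequality.

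The only non-routine input is the complex interpolation identity for Schatten classes, which I would justify either by citing the standard Calder\'on/Lions theory (as is done in Chapter~2 of Simon's book) or, more concretely, by constructing on a finite-rank approximation of $T$ with singular value decomposition $T=\sum_n s_n\, e_n\otimes\eta_n$ the analytic family $T_\zeta=\sum_n s_n^{\,p(1-\zeta)+p\zeta/2}\,e_n\otimes \eta_n$ and applying Stein's interpolation theorem directly to $\zeta\mapsto \F_W(T_\zeta)$. Finite-rank operators are dense in $\cS^p$ for $1\leq p<\infty$, so the bound extends to all of $\cS^p$ by continuity, completing the proof.
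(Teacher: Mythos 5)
Your proof is correct and follows exactly the route the paper indicates: the $\cS^1\to L^\infty$ bound from trace duality with the unitary $\rho(z)$, the $\cS^2\to L^2$ unitarity due to Pool, and complex interpolation between the two endpoints (the paper simply cites Werner's Proposition $3.4$, obtained "through interpolation" in the same way). The interpolation exponents are set up correctly, so there is nothing to add.
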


Consider the rank-one operator $g\otimes h$ with $g,h\in L^2(\R^d)$. The Fourier-Wigner transform of $g\otimes h$ turns out to be the cross-ambiguity function given by \eqref{AmbiguityDef},
\[
\F_W(g\otimes h)(z)=\mathcal{A}(g,h)(z).
\]

Let $Pf(x)=f(-x)$ denote the parity operator, while $\alpha_z(T)=\rho(z)T\rho(-z)$ is conjugating an operator with the symmetric time-frequency shift. Let $S,T\in\cS^1$ and $f\in L^1(\R^{2d})$. We then define an operator-operator convolution by
\[
T\star S(z):=\tr(T\alpha_z(PSP)),
\]
and a function-operator convolution by
\[
f\star T=T\star f:=\int_{\R^{2d}}f(z)\alpha_z(T)\,dz,
\]
where the integral is considered weakly.
Note that $T\star S$ gives a function, while $T\star f$ defines an operator. Associated to these convolutions is a version of Young's convolution inequality which we call Werner-Young's inequality. This was first proven in \cite{Werner}, but we state it as given in \cite{Luef_Skrettingland_19}.
\begin{theorem}[Werner-Young's inequality]\label{Werner-Young}
Let $1\leq p,q,r\leq \infty$ be such that $1+r^{-1}=p^{-1}+q^{-1}$. If $f\in L^p(\R^{2d})$, $T\in \cS^p$ and $S\in \cS^q$, then $f\star S\in \cS^r$ and $T\star S\in L^{r}(\R^{2d})$. Moreover, there are the norm bounds
\begin{align*}
\|f\star S\|_{\cS^r}\leq& \|f\|_{L^p}\|S\|_{\cS^q},\\
\|T\star S\|_{L^r}\leq& \|T\|_{\cS^p}\|S\|_{\cS^q}.
\end{align*}
\end{theorem}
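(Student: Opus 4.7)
The plan is to prove Werner-Young's inequality by first verifying three ``corner'' endpoint cases for each of the two convolution operations, and then filling in the interior of the admissible region $\{(1/p, 1/q, 1/r): 1/p + 1/q - 1 = 1/r\}$ by complex interpolation, using the classical identification $[\cS^{p_0},\cS^{p_1}]_\theta = \cS^p$ with $1/p = (1-\theta)/p_0 + \theta/p_1$. The argument mirrors the classical derivation of Young's inequality, but each corner requires a technique adapted to the Schatten setting.

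For the function-operator convolution $f\star S$, I would verify the three corners $(1,1,1)$, $(1,0,0)$, $(0,1,0)$ (points written as $(1/p,1/q,1/r)$). The two corners with $p=1$ both follow from the Bochner integral bound
\[
\|f\star S\|_{\cS^q}\leq \int_{\R^{2d}} |f(z)|\,\|\alpha_z(S)\|_{\cS^q}\,dz = \|f\|_{L^1}\|S\|_{\cS^q},
\]
using that $\alpha_z$ is an isometry on every Schatten class (as $\rho(z)$ is unitary). For the corner $(0,1,0)$, namely $\|f\star S\|_{\cS^\infty}\leq \|f\|_{L^\infty}\|S\|_{\cS^1}$, I would reduce via the singular-value decomposition of $S$ to the rank-one case $S = \chi\otimes\xi$; there the matrix element $\langle (f\star S)u,v\rangle$ equals $\int f(z)\mathcal{A}(u,\xi)(z)\overline{\mathcal{A}(v,\chi)(z)}\,dz$, which is bounded by H\"older and Moyal's identity by $\|f\|_\infty\|u\|_2\|v\|_2\|\chi\|_2\|\xi\|_2$.

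For the operator-operator convolution $T\star S$ the three corners are again $(1,1,1)$, $(1,0,0)$, $(0,1,0)$. The last two are pointwise immediate from the trace inequality $|\tr(AB)|\leq \|A\|_{\cS^1}\|B\|_{\cS^\infty}$ applied to $T\star S(z)=\tr(T\alpha_z(PSP))$. The corner $(1,1,1)$ is the substantive estimate. I would expand $T=\sum_n s_n(T)\psi_n\otimes\phi_n$ and $S=\sum_m s_m(S)\chi_m\otimes\xi_m$, use the identity $\alpha_z(u\otimes v)=\rho(z)u\otimes\rho(z)v$ together with $(u_1\otimes v_1)(u_2\otimes v_2)=\langle u_2,v_1\rangle\,u_1\otimes v_2$ and $\tr(u\otimes v)=\langle u,v\rangle$ to obtain
\[
T\star S(z)=\sum_{n,m}s_n(T)s_m(S)\,\overline{\mathcal{A}(\phi_n,P\chi_m)(z)}\,\mathcal{A}(\psi_n,P\xi_m)(z),
\]
and then apply Tonelli, Cauchy-Schwarz in $z$ and Moyal's identity termwise to deduce $\|T\star S\|_{L^1}\leq \sum_{n,m}s_n(T)s_m(S)=\|T\|_{\cS^1}\|S\|_{\cS^1}$ (using $\|P\cdot\|_2=\|\cdot\|_2$).

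With the six corner bounds in hand, I would fix $S$ and apply Riesz-Thorin to the linear map $T\mapsto T\star S$ between Schatten spaces, then fix $T$ and interpolate in the $S$ slot, sweeping the full admissible region; an identical two-step procedure handles $f\star S$. The main obstacle I expect is the $\cS^1\times\cS^1\to L^1$ endpoint: the other five corners reduce almost trivially to isometry of $\alpha_z$ or to trace duality, whereas here one must manipulate an unconditionally convergent double sum of ambiguity functions and justify interchange with the integral. A dominated convergence argument using that each $\mathcal{A}(\phi_n,P\chi_m)\in L^2$, combined with the covariance identity of Lemma~\ref{Covariance property}, should secure the required control.
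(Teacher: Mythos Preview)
The paper does not actually supply a proof of this theorem: it is stated as a preliminary result and attributed to Werner's original paper and to Luef--Skrettingland, so there is no in-paper argument to compare against. Your proposed proof is correct and is essentially the standard derivation found in those references---establish the three endpoint estimates for each convolution and then interpolate across the Schatten and Lebesgue scales---so nothing needs to be changed.
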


These convolutions interact with the Fourier-Wigner transform in the same way as the classical convolution interacts with the Fourier transform.
\begin{theorem}[Proposition $3.12$ in \cite{Luef_Skrettingland_19}]
Let $f\in L^1$, $S,T\in \cS^1$. Then the following holds,
\begin{align*}
\F_W(f\star S)=\F_\sigma(f)\F_W(S),\\
\F_\sigma(T\star S)=\F_W(T)\F_W(S).
\end{align*}
\end{theorem}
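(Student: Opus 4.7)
My overall approach is to unfold each identity from the definitions of the Fourier--Wigner transform and the convolutions, then exploit cyclicity of the trace together with the Weyl--type commutation relation $\rho(z)\rho(w) = e^{\pi i \sigma(z,w)}\rho(z+w)$, which itself follows from a short direct computation using \eqref{Schr-rep}. The first identity reduces to a single commutation; the second is more delicate, and I would handle it by reducing to rank-one operators.

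For $\F_W(f\star S) = \F_\sigma(f)\F_W(S)$, the fact that $f\star S \in \cS^1$ (Werner--Young with $p=q=r=1$) allows the trace pairing with $\rho(-w)$ to be moved inside the defining weak integral, giving
\[
\F_W(f\star S)(w) = \int_{\R^{2d}} f(z)\,\tr\!\bigl(\rho(z)S\rho(-z)\rho(-w)\bigr)\,dz.
\]
Cyclicity rewrites the trace as $\tr(S\rho(-z)\rho(-w)\rho(z))$, and two applications of the commutation relation (together with bilinearity and antisymmetry of $\sigma$) yield $\rho(-z)\rho(-w)\rho(z) = e^{2\pi i \sigma(z,w)}\rho(-w)$. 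Since $-2\pi i \sigma(w,z) = 2\pi i \sigma(z,w)$, the integrand becomes $f(z)\,e^{-2\pi i \sigma(w,z)}\F_W(S)(w)$, and the $z$-integration produces $\F_\sigma(f)(w)\F_W(S)(w)$.

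For $\F_\sigma(T\star S) = \F_W(T)\F_W(S)$, both sides are bilinear and continuous in $(T,S)\in\cS^1\times\cS^1$ (Werner--Young bounds $T\star S$ in $L^\infty$, and $\F_W$ sends $\cS^1$ into $C_0$), so via the singular-value decomposition it suffices to verify the identity for rank-one operators. For $T = u\otimes v$ and $S = g\otimes h$, the identities $P(g\otimes h)P = (Pg)\otimes(Ph)$ and $\alpha_z(f_1\otimes f_2) = (\rho(z)f_1)\otimes(\rho(z)f_2)$ combine with the formula for products of rank-one operators to give
\[
(T\star S)(z) = \mathcal{A}(u, Ph)(z)\,\overline{\mathcal{A}(v, Pg)(z)}.
\]
It then remains to identify $\F_\sigma$ of this product with $\mathcal{A}(u,v)(w)\mathcal{A}(g,h)(w) = \F_W(T)(w)\F_W(S)(w)$; this reduces (via $P\rho(z)P = \rho(-z)$, used to absorb the parities) to the classical Moyal-type identity relating $\F_\sigma$ of products of ambiguity functions to ambiguity functions with swapped arguments.

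The principal challenge I expect is sign bookkeeping: the phases arising from $\rho$, $\sigma$, $\F_\sigma$, the $-w$ in $\F_W$, and the parity $P$ all need to cancel in a precise way, and a slip in any convention would derail the argument. A secondary concern is the rank-one reduction in the second identity, which is routine given the continuity estimates above but is worth explicit verification, since after the reduction the remaining step is a well-known calculation for the cross-ambiguity function.
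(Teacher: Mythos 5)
The paper does not prove this statement at all: it is imported verbatim as Proposition 3.12 of the cited Luef--Skrettingland paper, so there is no in-text argument to compare yours against. Your proposal is correct and would serve as a self-contained proof. The first identity is exactly the standard computation: with the convention \eqref{Schr-rep} one indeed has $\rho(z)\rho(w)=e^{\pi i\sigma(z,w)}\rho(z+w)$, hence $\rho(-z)\rho(-w)\rho(z)=e^{2\pi i\sigma(z,w)}\rho(-w)$, and the phase matches $e^{-2\pi i\sigma(w,z)}$, so the $z$-integral produces $\F_\sigma(f)(w)\F_W(S)(w)$; pairing the weak integral against the bounded operator $\rho(-w)$ is legitimate since the weak definition is exactly duality of $\cS^1$ against $\mathcal{L}(L^2)$. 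For the second identity, your rank-one formula $(u\otimes v)\star(g\otimes h)(z)=\mathcal{A}(u,Ph)(z)\,\overline{\mathcal{A}(v,Pg)(z)}$ checks out, and the ``Moyal-type identity'' you leave as the last step is precisely $\F_\sigma\bigl(\mathcal{A}(f_1,g_1)\overline{\mathcal{A}(f_2,g_2)}\bigr)=\mathcal{A}(f_1,f_2)\overline{\mathcal{A}(g_1,g_2)}$, which with $g_1=Ph$, $g_2=Pg$ and $\rho(z)^{*}=\rho(-z)$ gives exactly $\mathcal{A}(u,v)\mathcal{A}(g,h)$, so the argument closes (only note that for the density argument the useful Werner--Young exponent is $r=1$, i.e.\ $T\star S\in L^1$, so that $\F_\sigma(T\star S)$ is a genuine function and passes to the limit pointwise; the $L^\infty$ bound alone would not do that). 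A slightly shorter route to the second identity, closer in spirit to how such facts are derived in the quantum harmonic analysis literature, is via Weyl symbols: since $\tr(AB)=\int a_A a_B$, $a_{\alpha_z(B)}(y)=a_B(y-z)$ and $a_{PBP}(y)=a_B(-y)$, one gets $T\star S=a_T * a_S$, whence $\F_\sigma(T\star S)=\F_\sigma(a_T)\F_\sigma(a_S)=\F_W(T)\F_W(S)$ by the remark in the paper identifying $\F_W(T)=\F_\sigma(a_T)$; this avoids both the rank-one reduction and the phase bookkeeping, at the cost of invoking the unitarity of Weyl quantization.
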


\subsection{Weyl calculus, Schwartz operators and tempered distributions}
An application of Schwartz' kernel theorem gives that for any continuous linear operator $T:\mathscr{S}(\R^d)\to \sS'(\R^d)$ we can associate a tempered distribution $a_T\in\mathscr{S}'(\R^{2d})$, called the Weyl symbol of $T$, which is defined through the relation
\begin{equation}\label{WeylQuant}
\langle T \psi,\varphi\rangle_{\sS',\sS}=\langle a_T, \mathcal{W}(\varphi,\psi)\rangle_{\sS',\sS},
\end{equation}
for all $\psi,\varphi\in\mathscr{S}(\R^d)$. Here $\mathcal{W}$ denotes the cross-Wigner distribution defined by
\[
\mathcal{W}(\varphi,\psi)(x,\xi):=\int_{\R^d}\varphi\left(x+\frac{t}{2}\right)\overline{\psi\left(x-\frac{t}{2}\right)}e^{-2\pi i t\cdot\xi}\,dt=\F_\sigma\left(\mathcal{A}(\varphi,\psi)\right)(x,\xi).
\]
Since any $a\in\sS'(\R^{2d})$ necessarily defines a continuous linear map from $\sS(\R^{d})$ to $\sS'(\R^d)$ through \eqref{WeylQuant}, we see that there is an isomorphism between $\sS'(\R^{2d})$ and $\mathcal{L}(\sS(\R^d);\sS'(\R^d))$. For a more detailed description we refer to Theorem $14.3.5$ in \cite{Grochenig}.

\begin{definition}
Let $\mathfrak{S}$ denote the space of all continuous operators $L_a:\mathscr{S}(\R^{d})\to \mathscr{S}'(\R^{d})$ for which the Weyl symbol is in $\mathscr{S}(\R^{2d})$, i.e.
\begin{equation*}
\mathfrak{S}:=\left\{L_a:\mathscr{S}(\R^d)\to\mathscr{S}'(\R^d): a\in \mathscr{S}(\R^{2d})\right\}.
\end{equation*}
\end{definition}
The class $\fS$ is a subspace of $\cS^1$ by Proposition $4.1$ in \cite{Grochenig-Heil}. For a detailed study of $\mathfrak{S}$, we refer to \cite{Keyl-Kiukas-Werner_16}.
We can define an isomorphism $\iota:\mathscr{S}(\R^{2d})\to \mathfrak{S}$ by $\iota(\varphi)=L_\varphi$, which induces an isomorphism $\iota^*:\mathfrak{S}'\to \mathscr{S}'(\R^{2d})$ given by
\[
\langle B,\iota (\varphi)\rangle_{\mathfrak{S}',\mathfrak{S}}=\langle \iota^*(B),\varphi\rangle_{\mathscr{S}',\mathscr{S}}.
\]
By Schwartz' kernel theorem we may identify $B$ with the continuous operator $L_{\tau^*(B)}:\mathscr{S}(\R^{d})\to\mathscr{S}'(\R^d)$, and thus we can extend the Fourier-Wigner transform to an isomorphism $\fS'\to\sS'(\R^{2d})$ through the identification
\[
\F_W(B)=\F_\sigma({\iota^*B}).
\]
This also allows us to define the Fourier-Wigner transform of a bounded linear operator on $L^2(\R^d)$ by first restricting it to an operator acting on $\sS(\R^d)$, and then identifying it with an element of $\fS'$. From here on, we will identify $\fS'$ with $\mathcal{L}(\sS(\R^{2d});\sS'(\R^{2d}))$ and denote $\iota^*T=a_T$, which we call the Weyl symbol of $T\in\fS'$.

\begin{definition}
For $T\in\mathfrak{S}'$ we define $\F_W(T)$ to be the tempered distribution such that
\[
\langle T,S\rangle_{\mathfrak{S}',\mathfrak{S}}=\langle\F_W(T),\F_W(S)\rangle_{\mathscr{S}',\mathscr{S}},
\]
holds for all $S\in \mathfrak{S}$.
\end{definition}
\begin{remark}
For $T\in\fS'$, let $a_T=\iota^* T$ denote the Weyl symbol of $T$. Then for any $\psi\in\sS(\R^{2d})$,
\[
\langle T,L_\psi\rangle_{\fS',\fS}=\langle a_T,\psi\rangle_{\sS',\sS}=\langle \F_\sigma(a_T),\F_\sigma(\psi)\rangle_{\sS',\sS}=\langle \F_{\sigma}(a_T),\F_W(L_\psi)\rangle_{\sS',\sS},
\]
where $\F_\sigma^{-1}=\F_\sigma$ was used for the second equality. This shows that $\F_W(T)=\F_\sigma(a_T)$.
\end{remark}
The following result was originally proved in section $5$ of \cite{Keyl-Kiukas-Werner_16} as several different results, and summarised in \cite{Luef_Skrettingland_19}.
\begin{prop}[Proposition $3.16$ in \cite{Luef_Skrettingland_19}]\label{ConvProp-Distribution}
Let $S,T\in\fS$, $\psi\in\sS(\R^{2d})$, $\tau\in\sS'(\R^{2d})$ and $A\in \fS'$. Then:
\begin{enumerate}[i)]
\item The following convolution relations hold,
\begin{align*}S\star T\in \sS(\R^{2d}),&\qquad \psi\star S\in\fS,\\
S\star A\in \sS'(\R^{2d}),&\qquad \psi\star A\in \fS',\\
\psi*\tau\in\sS'(\R^{2d}),&\qquad S\star \tau\in \fS'.
\end{align*}
\item $\F_W$ extends to a topological isomorphism from $\fS'$ to $\sS'(\R^{2d})$.
\item The relations \begin{align*}
\F_\sigma(S\star A)=&\,\F_W(S)\F_W(A),\, &&\F_W(\psi\star A)=\F_\sigma(\psi)\F_W(A),\\
\F_\sigma(\psi*\tau)=&\,\F_\sigma(\psi)\F_\sigma(\tau),\, &&\F_W(S\star \tau)=\F_W(S)\F_\sigma(\tau),
\end{align*}
hold.
\item The Weyl symbol of $A$ is given by $a_A=\F_\sigma(\F_W(A))$.
\end{enumerate}
\end{prop}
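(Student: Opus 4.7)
The plan is to prove the four assertions in the order (ii), (iv), (i), (iii), since (ii) and (iv) are structural consequences of the Weyl correspondence, while (i) and (iii) then follow by combining the Schwartz-case Fourier-convolution identities from Proposition 3.12 with a duality extension.

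I would first establish (ii). The Weyl quantization $\iota:\sS(\R^{2d})\to\fS$ is a topological isomorphism, this being essentially the content of Schwartz' kernel theorem applied to the Weyl correspondence and developed carefully in \cite{Keyl-Kiukas-Werner_16}. Transposing yields a topological isomorphism $\iota^*:\fS'\to\sS'(\R^{2d})$, $A\mapsto a_A$. Composing with the topological isomorphism $\F_\sigma$ of $\sS'(\R^{2d})$ shows that $\F_W=\F_\sigma\circ\iota^*$ is a topological isomorphism $\fS'\to\sS'(\R^{2d})$. Assertion (iv) is then immediate: using $\F_\sigma^2=\mathrm{Id}$, we obtain $a_A=\iota^*A=\F_\sigma(\F_\sigma(\iota^*A))=\F_\sigma(\F_W(A))$, which is exactly the identity already sketched in the remark preceding the proposition.

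For (i), the Schwartz-on-Schwartz cases pass through the Fourier side. Since $\fS\subset\cS^1$, Proposition 3.12 applies to $S,T\in\fS$ and gives $\F_\sigma(S\star T)=\F_W(S)\F_W(T)$. As $a_S,a_T\in\sS(\R^{2d})$ and $\F_\sigma$ preserves $\sS(\R^{2d})$, both $\F_W(S)$ and $\F_W(T)$ are Schwartz, hence so is their pointwise product, and applying $\F_\sigma$ once more places $S\star T$ in $\sS(\R^{2d})$. Similarly, for $\psi\in\sS(\R^{2d})$ and $S\in\fS$, Proposition 3.12 gives $\F_W(\psi\star S)=\F_\sigma(\psi)\F_W(S)\in\sS(\R^{2d})$, and part (iv) then identifies $\psi\star S$ as an element of $\fS$. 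The remaining distributional convolutions are defined by transposition: for $A\in\fS'$ and $\psi\in\sS(\R^{2d})$, set $\langle\psi\star A,S\rangle_{\fS',\fS}:=\langle A,\check\psi\star S\rangle_{\fS',\fS}$ with $\check\psi(z)=\psi(-z)$; continuity of $S\mapsto\check\psi\star S$ from $\fS$ to $\fS$, established above, guarantees $\psi\star A\in\fS'$. Analogous transposed formulas define $S\star A$ and $S\star\tau$, while $\psi*\tau$ is the standard convolution of a Schwartz function with a tempered distribution.

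Finally, I would establish the Fourier-convolution relations in (iii) by a duality chase. For instance, to prove $\F_W(\psi\star A)=\F_\sigma(\psi)\F_W(A)$, both sides are tempered distributions that one tests against an arbitrary $\F_W(S)$ with $S\in\fS$ (which by (ii) exhausts a dense class of test objects). Unpacking the left side via the definitions of $\F_W$ on $\fS'$ and of $\psi\star A$, and using the Schwartz-case identity $\F_\sigma(\psi)\F_W(S)=\F_W(\psi\star S)$, reduces the claim to an identity that already holds on Schwartz inputs. The remaining three Fourier relations follow by the same template. The main obstacle, and in my view the only real substance of the argument, is bookkeeping: one must insert the correct reflections $\check\psi$ and parity conjugations $PSP$ in the transposed definitions so that the Fourier-Wigner transform intertwines all four kinds of convolution simultaneously and every dual pairing matches. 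Once Proposition 3.12 and the topological isomorphism (ii) are in hand, no further analytic input is required.
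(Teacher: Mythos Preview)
The paper does not give a proof of this proposition; it is quoted from the literature, with the sentence preceding it stating that the result ``was originally proved in section 5 of \cite{Keyl-Kiukas-Werner_16} as several different results, and summarised in \cite{Luef_Skrettingland_19}.'' So there is no in-paper argument to compare against.

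Your sketch is nonetheless the standard route and matches how the cited sources proceed: establish the topological isomorphism $\F_W=\F_\sigma\circ\iota^*$ first, read off (iv) from $\F_\sigma^2=\mathrm{Id}$, verify the Schwartz-on-Schwartz convolution and Fourier identities via the $\cS^1$-level Proposition~3.12, and then extend to $\fS'$ and $\sS'$ by transposition. The only place where one has to be careful is the one you already flag, namely inserting the correct parity/reflection in the transposed definitions so that the adjoint identities (e.g.\ $\langle \psi\star A,S\rangle=\langle A,\check\psi\star S\rangle$ and its operator analogues with $PSP$) line up with the Fourier-Wigner convolution theorem; this bookkeeping is exactly what \cite{Keyl-Kiukas-Werner_16} carries out. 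No gap in your outline.
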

\section{A convolution result for operators and functions on phase space}\label{DivisionLemmaSec}
In this section we introduce the main component of the proof of Theorem \ref{MainThm}. By convolving with rank-one operators we are able to transfer between operators and functions on $\R^{2d}$. Werner-Young's inequality (Theorem \ref{Werner-Young}) then gives that the map $S\mapsto S\star(g\otimes h)$ maps $\cS^p$ to $L^p(\R^{2d})$ for fixed $g,h\in L^2(\R^d)$. Moreover, we also control the $L^p$ norm of the convolution as
$\|S\star(g\otimes h)\|_{L^p}\leq \|S\|_{\cS^p}$ whenever $g,h$ are $L^2$ normalised.


In what follows, we are interested in controlling the $L^p$ norm of the convolution $S\star(g\otimes h)$ by the Schatten $p$ norm of $S$ from below. This would naturally follow from Theorem \ref{Werner-Young} if we could write the operator as a triple convolution, say $S=S\star(g\otimes h)\star B$ for some $B\in \cS^1$. If this is the case, then by applying Theorem \ref{Werner-Young} a second time, we would end up with
\[
\|S\|_{\cS^p}\leq \|B\|_{\cS^1}\|S\star(g\otimes h)\|_{L^p(\R^{2d})}.
\]

The next result shows that any $S\in\fS'$ can be written as $S=S\star(g\otimes h)\star B$ for some operator $B$ under the assumption that $\F_W(S)$ is compactly supported. This result is inspired by a classical argument about functions whose Fourier transforms are compactly supported \cite[Chapter $5$]{Wolff}. If $f\in \sS(\R^{2d})$ and $\F_\sigma(f)$ is supported on the unit ball $B_1(0)$, then given a smooth cut-off function $\psi\in C_c^\infty(\R^{2d})$ with $\psi\equiv 1$ on $B_1(0)$, it follows that
\[
\F_\sigma(f)(\xi)=\F_\sigma(f)(\xi)\psi(\xi)=\F_\sigma(f*\F_\sigma(\psi))(\xi).
\]
This implies that $f=f*\F_\sigma(\psi)$. With this in mind, we want to extend the same argument to distributions and operators. We therefore present the following proposition.
\begin{prop}\label{Division_prop}
Let $\Omega$ be a bounded set in $\R^{2d}$. There exist $L^2$-normalised $g,h\in \sS(\R^{d})$ and $B\in \fS$ such that:
\begin{enumerate}[i)]
\item If $\tau\in\sS'(\R^{2d})$ and $\F_\sigma(\tau)$ is supported on $\Omega$, then $\tau=\tau\star(g\otimes h)\star B$.
\item If $T\in\fS'$ and $\F_W(T)$ is supported on $\Omega$, then $T=T\star(g\otimes h)\star B$.
\end{enumerate}
\end{prop}

\begin{proof}
Since $\Omega$ is bounded, we can fix some $c_\Omega\in \Omega$ such that $\Omega\subseteq B_R(c_\Omega)$ for some $R>0$. Choose two $L^2$-normalised functions $g,h\in\mathscr{S}(\R^{d})$ such that $|\mathcal{A}(g,h)|\geq \delta>0$ on $B_{2R}(c_\Omega)$ for some $\delta>0$. Let $\Psi\in C_c^\infty(\R^{2d})$ be supported on $B_{2R}(c_\Omega)$ and $\Psi|_\Omega\equiv 1$.
Note that we can write
\[
\Psi(z)=\frac{\mathcal{A}(g,h)(z)}{\mathcal{A}(g,h)(z)}\Psi(z),
\]
as $|\mathcal{A}(g,h)(z)|\geq \delta>0$ for any $z$ on the support of $\Psi$. Moreover, it follows that for each $z\in \R^{2d}$,
\[
\Psi(z)=\F_W(g\otimes h)(z)\F_W(B)(z)
\]
where $B$ is defined as
\[
B=\F_W^{-1}\left(\frac{\Psi}{\mathcal{A}(g,h)}\right)=\int_{\R^{2d}}\frac{1}{\mathcal{A}(g,h)(z)}\Psi(z)\rho(z)\,dz.
\]
We know that $1/\mathcal{A}(g,h)$ is a bounded $C^\infty$ function as $\mathcal{A}(g,h)$ is bounded away from zero on $B_{2R}(c_\Omega)$ and since the cross-ambiguity function of two Schwartz functions is in $\sS(\R^{2d})$ by Theorem $11.2.5$ in \cite{Grochenig}.
Thus, the spreading function of $B$ is a $C_c^\infty$-function, and it follows that $B\in \fS$ since the Weyl symbol of an operator is the symplectic Fourier transform of the spreading function by Proposition \ref{ConvProp-Distribution}.

Whenever $\F_\sigma(f)\in \sS'(\R^{2d})$ is supported on $\Omega$, it follows for any $\eta\in \sS(\R^{2d})$ that
\begin{align*}
\langle \F_\sigma(f),\eta\rangle_{\sS',\sS}=\langle \Psi\F_\sigma(f),\eta\rangle_{\sS',\sS}
=&\langle\F_\sigma(f)\F_W(g\otimes h)\F_W(B),\eta\rangle_{\sS',\sS}\\
=&\langle\F_W(f\star (g\otimes h))\F_W(B),\eta\rangle_{\sS',\sS}\\
=&\langle\F_\sigma(f\star(g\otimes h)\star B),\eta\rangle_{\sS',\sS}.
\end{align*}
This shows that $\F_\sigma(f)=\F_\sigma(f\star(g\otimes h)\star B)
$ in $\sS'$. Since the symplectic Fourier transform is an isomorphism on $\sS'$, we can conclude that
\[
f=f\star(g\otimes h)\star B.
\]
This proves assertion $i)$.

For $ii)$ we use a similar argument. Since $\F_W(T)$ is supported on $\Omega$ it follows that for any $\eta\in \sS(\R^{2d})$,
\begin{align*}
\langle \F_W(T),\eta\rangle_{\sS',\sS}=&\langle\Psi\F_W(T),\eta\rangle_{\sS',\sS}\\
=&\langle \F_W(T)\F_W(g\otimes h)\F_W(B),\eta\rangle_{\sS',\sS}\\
=&\langle\F_W\left(T\star \left(g\otimes h\right)\star B\right),\eta\rangle_{\sS',\sS}.
\end{align*}
We therefore conclude that $T=T\star(g\otimes h)\star B$ as the Fourier-Wigner transform is an isomorphism from  $\fS'$ to $\sS'(\R^{2d})$.
\end{proof}
\begin{remark}
One possible explicit choice of $g$ and $h$ can for instance be
\[
g(y)=\varphi_{c_\Omega}(y)=\rho(c_{\Omega})\varphi_0(y)=2^{\frac{d}{4}}e^{-\pi i x_\Omega\cdot \xi_\Omega}e^{2\pi i \xi_\Omega\cdot y}e^{-\pi|y-x_\Omega|^2},\quad h(y)=\varphi_0(y)=2^{\frac{d}{4}}e^{-\pi |y|^2}.
\]
where $c_\Omega=(x_\Omega,\xi_\Omega)$ is chosen as in the proof. For this choice we have by Lemma \ref{Covariance property}
\[
\F_W(g\otimes h)(z)=\mathcal{A}(g,h)(z)=e^{\pi i \sigma(c_\Omega,z)}e^{-\pi\frac{|z-c_\Omega|^2}{2}},
\]
and thus
\[
|\mathcal{A}(g,h)(z)|\geq e^{-2\pi R^2}>0,
\]
for all $z\in B_{2R}(c_\Omega)$.
The operator $B$ is then given by
\[
B=\int_{\R^{2d}}\Psi(z)e^{-\pi i \sigma(c_\Omega,z)+\pi \frac{|z-c_\Omega|^2}{2}}\rho(z)\,dz.
\]
\end{remark}

The Paley-Wiener-Schwartz lemma ensures that any tempered distribution $\tau\in\sS'(\R^{2d})$ for which the Fourier transform is compactly supported is in fact a $C^\infty$ function of at most polynomial growth on $\R^{2d}$ \cite[Chapter $7$]{Hormander-ALPDO1}. This means that we can define an $L^p$ norm of such distributions, even though the integral might not converge. This leads to the following corollary.

\begin{corollary}\label{LW-bnd-cor}
Let $\Omega$ be a bounded set, and $1\leq p\leq \infty$. There exists $L^2$-normalised $g,h\in \sS(\R^{d})$ and $C=C(\Omega)>0$ such that 
\begin{enumerate}[i)]
\item If $\tau\in \sS'(\R^{2d})$ and $\F_\sigma(\tau)$ is supported on $\Omega$, then
\[
\|\tau\|_{L^p(\R^{2d})}\leq C(\Omega)\|\tau\star(g\otimes h)\|_{\cS^p}.
\]
\item If $T\in\fS'$ and $\F_W(T)$ is supported on $\Omega$, then
\[
\|T\|_{\cS^p}\leq C(\Omega) \|T\star(g\otimes h)\|_{L^p(\R^{2d})}.
\]
\end{enumerate}
\end{corollary}
\begin{proof}
This is a direct consequence of Proposition \ref{Division_prop} and Theorem \ref{Werner-Young}. Namely, for $i)$ we have
\[
\|\tau\|_{L^p(\R^{2d})}=\|\tau\star\left(g\otimes h\right)\star B\|_{L^p(\R^{2d})}\leq \|B\|_{\cS^1}\|T\star \left(g\otimes h\right)\|_{\cS^p},
\]
while for $ii)$
\[
\|T\|_{\cS^p}=\|T\star\left(g\otimes h\right)\star B\|_{\cS^p}\leq \|B\|_{\cS^1}\|T\star \left(g\otimes h\right)\|_{L^p(\R^{2d})}.
\]
Since the operator 
\[
B=\int_{\R^{2d}}\frac{1}{\mathcal{A}(g,h)(z)}\Psi(z)\rho(z)\,dz
\]
only depends on the set $\Omega$, we see that 
the constant 
\[
C=\|B\|_{\cS^1}<\infty,
\]
depends only on $\Omega$ as well.
\end{proof}
\begin{remark}
If $\tau\in \sS'(\R^{2d})$ and $\F_\sigma(\tau)$ is supported on a bounded set $\Omega$, then a second application of Werner-Young's inequality results in 
\[
\|\tau\|_{L^p(\R^{2d})}\leq C(\Omega)\|\tau\star (g\otimes h)\|_{\cS^p}\leq C(\Omega)\|g\|_{L^2(\R^d)}\|h\|_{L^2(\R^d)}\|\tau\|_{L^p(\R^{2d})}.
\]
This shows that the norms are equivalent, and we have convergence if and only if $\tau\in L^p(\R^{2d})$. The same argument also holds for $T\in \fS'$ with $\F_W(T)$ supported on $\Omega$.

\end{remark}
\begin{remark}
There is the trivial lower bound on the constant $C(\Omega)$,
\[
\|B\|_{\cS^1}\geq \|\F_{W}(B)\|_{L^\infty(\R^{2d})}\geq \frac{1}{|\mathcal{A}(g,h)(c_\Omega)|}|\Psi(c_\Omega)|\geq\frac{1}{\|g\|_{L^2}\|h\|_{L^2}}=1,
\]
as both $g$ and $h$ are $L^2$-normalised.
\end{remark}

\section{Proof of Theorem \ref{MainThm}}
In this section we present the proof of Theorem \ref{MainThm}. The proof is divided into two propositions, each proving one of the inequalities in the theorem. 
\begin{prop}
Let $\Omega\subset \R^{2d}$ be a bounded set and $\mathcal{P}_\Omega$ a partition of $\Omega$. Then there exists $C(\Omega)>0$ such that
\[
\mathcal{D}_{p,q}^\mathscr{Q}(\mathcal{P}_\Omega)\leq C(\Omega)\,\mathcal{D}_{p,q}^\mathscr{C}(\mathcal{P}_\Omega).
\]
\end{prop}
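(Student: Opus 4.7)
The plan is to translate the operator decoupling problem into the classical one by passing through the Fourier-Wigner picture, using the two flavors of convolution in quantum harmonic analysis. I would fix a pair of ``mollifiers'' $(\Phi, R)$ depending only on $\Omega$ whose spreading functions satisfy $\F_W(\Phi)\,\F_W(R) \equiv 1$ on $\Omega$, then use operator-operator convolution to produce an $L^p$-family $f_\theta := T_\theta \star \Phi$ with symplectic Fourier support in $\theta$, apply classical $\ell^qL^p$ decoupling to $\{f_\theta\}$, and finally recover $\sum_\theta T_\theta$ via the function-operator convolution $(\sum_\theta f_\theta) \star R$. Werner-Young is used twice, once in each direction, to convert between Schatten and Lebesgue norms.

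For the explicit choice, let $\Phi := \varphi_0 \otimes \varphi_0 \in \cS^1$ with $\varphi_0$ the $L^2$-normalized Gaussian, so that by \eqref{Ambi-Gauss} we have $\F_W(\Phi)(z) = e^{-\pi|z|^2/2}$ and $\|\Phi\|_{\cS^1} = 1$. Pick $\eta \in C_c^\infty(\R^{2d})$ with $\eta \equiv 1$ on a neighbourhood of $\overline{\Omega}$, and let $R$ be the unique operator in $\fS$ with $\F_W(R)(z) = e^{\pi|z|^2/2}\eta(z)$; existence and uniqueness follow from Proposition \ref{ConvProp-Distribution}(ii) because $e^{\pi|z|^2/2}\eta(z) \in \sS(\R^{2d})$, and $R \in \cS^1$ since $\fS \subset \cS^1$. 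By construction $\F_W(\Phi)\,\F_W(R) = \eta$, which equals $1$ on $\Omega$, and both operators depend only on $\Omega$.

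Set $f_\theta := T_\theta \star \Phi$. Werner-Young (Theorem \ref{Werner-Young}) with exponent triple $(p, 1, p)$ gives $f_\theta \in L^p(\R^{2d})$ with $\|f_\theta\|_{L^p} \leq \|T_\theta\|_{\cS^p}$, and Proposition \ref{ConvProp-Distribution}(iii) identifies $\F_\sigma(f_\theta) = \F_W(T_\theta)\,\F_W(\Phi)$, whose support is contained in $\theta$. Applying the classical decoupling inequality to $\{f_\theta\}$ yields
\[
\left\|\sum_\theta f_\theta\right\|_{L^p} \leq \mathcal{D}_{p,q}^{\mathscr{C}}(\mathcal{P}_\Omega)\left(\sum_\theta \|T_\theta\|_{\cS^p}^q\right)^{1/q}.
\]
Next I would verify $\left(\sum_\theta f_\theta\right)\star R = \sum_\theta T_\theta$. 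Using Proposition \ref{ConvProp-Distribution}(iii) twice, the Fourier-Wigner transform of the left-hand side is $\F_W(\sum_\theta T_\theta)\,\F_W(\Phi)\,\F_W(R) = \eta\,\F_W(\sum_\theta T_\theta)$; since $\F_W(\sum_\theta T_\theta)$ has compact support contained in $\Omega$ and $\eta \equiv 1$ there, the cutoff lemma for compactly supported distributions makes this product equal to $\F_W(\sum_\theta T_\theta)$, and injectivity of $\F_W$ on $\fS'$ then yields the operator identity. A second application of Werner-Young produces $\|\sum_\theta T_\theta\|_{\cS^p} \leq \|R\|_{\cS^1}\|\sum_\theta f_\theta\|_{L^p}$, and chaining the two estimates delivers the claim with $C(\Omega) := \|R\|_{\cS^1}$.

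The step I expect to be most delicate is the reconstruction identity $\left(\sum_\theta f_\theta\right)\star R = \sum_\theta T_\theta$, which is a quantum analogue of Wiener's division lemma: an operator whose spreading function is compactly supported in $\Omega$ is reproduced by convolving against any kernel pair whose spreading functions multiply to $1$ on $\Omega$. Care is needed to make the Fourier-Wigner calculation rigorous when the $T_\theta$ are only assumed to lie in $\mathcal{L}(\sS;\sS') \cap \cS^p$ rather than $\cS^1$, and to handle the sums when $\mathcal{P}_\Omega$ is countably infinite; this is precisely what the quantum division lemma of Section \ref{DivisionLemmaSec} supplies, in combination with the extension of $\F_W$ to the topological isomorphism $\fS' \cong \sS'(\R^{2d})$. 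The rest of the argument is a routine double application of Werner-Young's inequality.
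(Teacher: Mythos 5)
Your proposal is correct and follows essentially the same route as the paper: convolve each $T_\theta$ with a rank-one Gaussian to obtain functions $f_\theta$ with $\mathrm{supp}\,\F_\sigma(f_\theta)\subseteq\theta$, apply classical decoupling, control $\|f_\theta\|_{L^p}$ by Werner--Young, and recover $\|\sum_\theta T_\theta\|_{\cS^p}$ from $\|\sum_\theta f_\theta\|_{L^p}$ via a quantum Wiener division argument. The only difference is presentational: where the paper cites part ii) of Corollary \ref{LW-bnd-cor}, you re-derive that estimate inline through the explicit pair $\Phi=\varphi_0\otimes\varphi_0$ and $R$ with $\F_W(R)=e^{\pi|z|^2/2}\eta(z)$, which is exactly the construction underlying the paper's division lemma, so the two arguments coincide in substance.
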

\begin{proof}
Let $T=\sum_{\theta\in \mathcal{P}_\Omega}T_\theta$ with $\mathrm{supp}(\F_W(T_\theta))\subseteq \theta$. By part $ii)$ in corollary \ref{LW-bnd-cor} it follows that for some $L^2$ normalised $g,h\in\mathscr{S}(\R^{d})$,
\begin{align*}
\left\|\sum_{\theta\in \mathcal{P}_\Omega}T_\theta\right\|_{\cS^p}
\leq C(\Omega)\left\|\left(\sum_{\theta\in \mathcal{P}_\Omega}T_\theta\right)\star ( g\otimes h)\right\|_{L^p(\R^{2d})}
=C(\Omega)\left\|\sum_{\theta\in \mathcal{P}_\Omega}T_\theta\star ( g\otimes h)\right\|_{L^p(\R^{2d})}.
\end{align*}
Define $F_\theta:=T_\theta\star( g\otimes h)$, which is a $C^\infty(\R^{2d})$ function by Lemma \ref{Hormander-Lem}. By Proposition \ref{ConvProp-Distribution} it follows that
\[
\F_\sigma(F_\theta)=\F_W( g\otimes h)\F_W(T_\theta),
\]
and so $\mathrm{supp}(\F_\sigma(F_\theta))\subseteq \mathrm{supp}(\F_W(T_\theta))$. By the definition of the classical decoupling constant,
\begin{align*}
\left\|\sum_{\theta\in \mathcal{P}_\Omega}T_\theta\right\|_{\cS^{p}}\leq C(\Omega)\left\|\sum_{\theta\in \mathcal{P}_\Omega}F_\theta\right\|_{L^{p}(\R^{2d})}\leq C(\Omega)\mathcal{D}_{p,q}^{\mathscr{C}}(\mathcal{P}_{\Omega})\left(\sum_{\theta\in\mathcal{P}_\Omega}\|F_\theta\|^q_{L^{p}(\R^{2d})}\right)^\frac{1}{q}.
\end{align*}
To finish the proof, we invoke Theorem \ref{Werner-Young} to conclude that
\[
\|F_\theta\|_{L^p(\R^{2d})}=\|T_\theta\star( g\otimes h)\|_{L^{p}(\R^{2d})}\leq \| g\otimes h\|_{\cS^1}\|T_\theta\|_{\cS^p}=\| h\|_{L^2(\R^d)}\|g\|_{L^2(\R^d)} \|T_\theta\|_{\cS^p}.
\]
However, both $g$ and $h$ are $L^2$ normalised and so we can conclude that
\[
\left\|\sum_{\theta\in \mathcal{P}_\Omega}T_\theta\right\|_{\cS^{p}}\leq C(\Omega)\mathcal{D}_{p,q}^{\mathscr{C}}(\mathcal{P}_{\Omega})\left(\sum_{\theta\in\mathcal{P}_\Omega}\|T_\theta\|^q_{\cS^p}\right)^\frac{1}{q}.
\]
Since $\mathcal{D}_{p,q}^\mathscr{Q}(\mathcal{P}_\Omega)$ is the smallest constant for which the inequality holds, it follows that
\[
\mathcal{D}_{p,q}^\mathscr{Q}(\mathcal{P}_\Omega)\leq C(\Omega)\mathcal{D}_{p,q}^{\mathscr{C}}(\mathcal{P}_{\Omega}).
\]
\end{proof}

For the other direction, we have the following proposition which is proved in a similar manner by again utilising Corollary \ref{LW-bnd-cor}. 
\begin{prop}
Let $\Omega\subset \R^{2d}$ be a bounded set and $\mathcal{P}_\Omega$ a partition of $\Omega$. Then there exists $C(\Omega)>0$ such that
\[
\mathcal{D}_{p,q}^\mathscr{C}(\mathcal{P}_\Omega)\leq C(\Omega)\,\mathcal{D}_{p,q}^\mathscr{Q}(\mathcal{P}_\Omega).
\]
\end{prop}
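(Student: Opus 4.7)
The plan is to mirror the previous proposition in the opposite direction: convert the functions $f_\theta$ into operators via convolution with a fixed $L^2$-normalised rank-one operator, apply the quantum decoupling inequality to these operators, and then translate back. Fix $L^2$-normalised $g,h\in\sS(\R^d)$ supplied by Corollary \ref{LW-bnd-cor}, and let $f_\theta$ satisfy $\mathrm{supp}(\F_\sigma(f_\theta))\subseteq \theta$. Since $\F_\sigma\bigl(\sum_\theta f_\theta\bigr)$ is supported in $\Omega$, I would invoke part i) of Corollary \ref{LW-bnd-cor} (the function-side consequence of the quantum Wiener division lemma) to obtain
\[
\Bigl\|\sum_{\theta\in\mathcal{P}_\Omega} f_\theta\Bigr\|_{L^p(\R^{2d})}\leq C(\Omega)\,\Bigl\|\Bigl(\sum_{\theta\in\mathcal{P}_\Omega} f_\theta\Bigr)\star (g\otimes h)\Bigr\|_{\cS^p}.
\]

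Next, set $T_\theta := f_\theta \star (g\otimes h)$. By Proposition \ref{ConvProp-Distribution} (iii) and the identity $\F_W(g\otimes h)=\mathcal{A}(g,h)$,
\[
\F_W(T_\theta)=\F_\sigma(f_\theta)\,\mathcal{A}(g,h),
\]
so $\mathrm{supp}(\F_W(T_\theta))\subseteq \mathrm{supp}(\F_\sigma(f_\theta))\subseteq \theta$. Linearity of the convolution allows me to pull the sum outside and apply the definition of $\mathcal{D}^{\mathscr{Q}}_{p,q}(\mathcal{P}_\Omega)$ to the family $\{T_\theta\}$:
\[
\Bigl\|\sum_{\theta\in\mathcal{P}_\Omega} T_\theta\Bigr\|_{\cS^p}\leq \mathcal{D}^{\mathscr{Q}}_{p,q}(\mathcal{P}_\Omega)\Bigl(\sum_{\theta\in\mathcal{P}_\Omega}\|T_\theta\|_{\cS^p}^q\Bigr)^{1/q}.
\]
Then Werner-Young's inequality (Theorem \ref{Werner-Young}) with exponents $(p,1,p)$ gives $\|T_\theta\|_{\cS^p}\leq \|g\otimes h\|_{\cS^1}\|f_\theta\|_{L^p}=\|f_\theta\|_{L^p}$, since $g,h$ are $L^2$-normalised.

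Chaining the three inequalities yields
\[
\Bigl\|\sum_{\theta\in\mathcal{P}_\Omega} f_\theta\Bigr\|_{L^p(\R^{2d})}\leq C(\Omega)\,\mathcal{D}^{\mathscr{Q}}_{p,q}(\mathcal{P}_\Omega)\Bigl(\sum_{\theta\in\mathcal{P}_\Omega}\|f_\theta\|_{L^p(\R^{2d})}^q\Bigr)^{1/q},
\]
and taking the infimum over admissible constants on the left produces the claimed bound. The only non-routine ingredient is the first step, which relies on the function-side division estimate from Corollary \ref{LW-bnd-cor}; the remaining steps are a direct transcription of the argument in the previous proposition. I expect the main subtlety is simply ensuring that part i) of that corollary applies to the whole sum, not just to individual $f_\theta$, which is immediate since its hypothesis only requires $\mathrm{supp}(\F_\sigma(\sum_\theta f_\theta))\subseteq \Omega$.
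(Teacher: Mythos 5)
Your proposal is correct and follows essentially the same route as the paper: apply part i) of Corollary \ref{LW-bnd-cor} to the full sum $f=\sum_\theta f_\theta$, observe via Proposition \ref{ConvProp-Distribution} that each $f_\theta\star(g\otimes h)$ has Fourier--Wigner transform supported in $\theta$, invoke the quantum decoupling constant, and finish with Werner--Young. The only difference is cosmetic (naming the operators $T_\theta$), so there is nothing to add.
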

\begin{proof}
Let $\{f_\theta\}_{\theta\in\mathcal{P}_\Omega}$ be a collection of functions such that $\mathrm{supp}\left(\F_\sigma(f_\theta)\right)\subseteq \theta$ for each $\theta$. Define the function 
\[
f=\sum_{\theta\in \mathcal{P}_\Omega}f_\theta,
\]
for which we have $\mathrm{supp}(\F_\sigma(f))\subseteq \Omega$.
By part $i)$ in Corollary \ref{LW-bnd-cor} there exist $L^2$ normalised $g,h\in\sS(\R^d)$ and a constant $C=C(\Omega)>0$ such that
\begin{equation}\label{Func-to-Op-eq-1}
\left\|\sum_{\theta\in \mathcal{P}_\Omega}f_\theta\right\|_{L^p(\R^{2d})}\leq C\, \|f\star(  g\otimes h)\|_{\cS^p}
=C\, \left\|\sum_{\theta\in\mathcal{P}_\Omega}f_\theta\star( g\otimes h)\right\|_{\cS^p}.
\end{equation}
Note however that $\mathrm{supp}(\F_W(f_\theta\star( h\otimes\varphi)))\subseteq \theta$ as $\F_\sigma(f_\theta)$ is supported on $\theta$ and
\[
\F_W(f_\theta\star( h\otimes\varphi))=\F_\sigma(f_\theta)\F_W(  g\otimes h),
\]
by Proposition \ref{ConvProp-Distribution}.
Thus, combining \eqref{Func-to-Op-eq-1} with the definition of the quantum decoupling constant, we see that
\[
\left\|\sum_{\theta\in\mathcal{P}_\Omega}f_\theta\right\|_{L^{p}(\R^{2d})}\leq C\, \left\|\sum_{\theta\in\mathcal{P}_\Omega}f_\theta\star( g\otimes h)\right\|_{\cS^p}\leq C\mathcal{D}_{p,q}^{\mathscr{Q}}(\mathcal{P}_\Omega)\left(\sum_{\theta\in\mathcal{P}_\Omega}\|f_\theta\star(  g\otimes h)\|_{\cS^p}^q\right)^\frac{1}{q}.
\]
By Theorem \ref{Werner-Young}, it follows that $\|f_\theta\star(  g\otimes h)\|_{\cS^p}\leq \|f_\theta\|_{L^p}$ for each $\theta$ as both $g$ and $h$ are $L^2$ normalised. Thus, we conclude that
\[
\left\|\sum_{\theta\in\mathcal{P}_\Omega}f_\theta\right\|_{L^{p}(\R^{2d})}\leq C\mathcal{D}_{p,q}^{\mathscr{Q}}(\mathcal{P}_\Omega)\left(\sum_{\theta\in\mathcal{P}_\Omega}\|f_\theta\|_{L^p(\R^{2d})}^q\right)^\frac{1}{q},
\]
which shows that
\[
\mathcal{D}_{p,q}^{\mathscr{C}}(\mathcal{P}_\Omega)\leq C(\Omega)\mathcal{D}_{p,q}^{\mathscr{Q}}(\mathcal{P}_\Omega).
\]
\end{proof}

\printbibliography

\end{document}